\documentclass[a4paper,12pt]{article}

\usepackage{pstricks}
\usepackage{graphicx,psfrag}
\usepackage{amsmath}
\usepackage{amssymb}
\usepackage{amsthm}
\usepackage{color}

\textwidth=160mm \oddsidemargin=0mm \evensidemargin=0mm
\tolerance=8400

                \newcommand {\Om}  {\Omega}
      \newcommand {\pl}   {\partial}        \newcommand {\s}    {\sigma}

      \newcommand {\RRR}  {{\mathbb R}}

     \newcommand {\beq}  {\begin{equation}}
      \newcommand {\eeq}  {\end{equation}}

      \newtheorem{theorem}{Theorem}

      \newtheorem{zam}{Remark}
      \newtheorem{opr}{Definition}

\author{Alexander Plakhov\thanks{Aveiro University, Portugal} \and Vera Roshchina\thanks{\'{E}vora University,  Portugal}}

\title{Invisibility in billiards}

\date{}

\begin{document}

\maketitle

\begin{abstract}
{The question of invisibility for bodies with mirror surface is studied in the framework of geometrical optics}. We construct bodies that are invisible/have zero resistance in two {mutually orthogonal} directions, and prove that there do not exist bodies which are invisible/have zero resistance in all possible directions {of incidence}.
\end{abstract}

\begin{quote}
{\small {\bf Mathematics subject classifications:} 37D50, 49Q10
}
\end{quote}

\begin{quote}
{\small {\bf Key words and phrases:}
Billiards, shape optimization, problems of minimal resistance, classical scattering, Newtonian aerodynamics, invisible bodies.}
\end{quote}

\section{Introduction}

Problems related to constructing invisible bodies are in the focus of attention nowadays. Apart from having the potential for various applications such as constructing invisible submarines, creating improved lenses for DVD readers that would allow to read denser information, the topic attracts attention of general public, mostly due to the concept of invisible cloak, which is a popular topic in fiction and movies (see \cite{NYTimes2007}, \cite{CNN}, \cite{DailyMail}). Apart from various implementations of such a cloak, which use cameras to project the image from behind on a specially designed surface (e.g. see \cite{UTokyo}), the bulk of the studies on invisible bodies focus on constructing materials with special refractive properties. The research in this direction was pioneered by V.G.Veselago in 1960's, who published a theoretical study of materials that allowed for a negative refractive index \cite{Veselago}. Such materials do not exist in nature, however, {they} can be engineered. They are called metamaterials and have been successfully constructed for some limited settings. Following \cite{CloakSchurig}, researchers at Duke University have demonstrated a body invisible for microwaves (see \cite{NYTimes2007}); in \cite{CloakValentine} construction of a 3D optical metamaterial with a negative refraction index was reported; in \cite{CloakErgin} metamaterials are used to hide a bump in a metallic mirror from angles up to 60$^\circ$ and a large bandwidth of unpolarized light.

In this article we are concerned with invisibility in billiards. We consider bodies with mirror surface and light rays falling on it. Invisibility in a direction $v \in S^2$ means that any incident light ray which initially moves along a straight line in this direction, after several reflections from the body's surface will eventually move along the same straight line. Invisibility in a set of directions means that the above is true for any direction from this set. In \cite{0-resist} { the notion of billiard invisibility was introduced and some examples of} bodies invisible in one direction { were provided}. { In this article we continue the study of this topic; our results are} twofold. First, we show that there exist bodies invisible in two { mutually orthogonal} directions. Second, we prove that bodies invisible in all directions in $S^2$ do not exist.

{ Notice that somewhat similar results were obtained in wave scattering. It was shown, in the first Born approximation, that there exist bodies invisible for any {\it finite} number of directions \cite{Dev}, and there are no bodies invisible for {\it all} directions of incidence \cite{Wolf}.}

There is a closely related sort of problems. Consider a parallel flow of point particles at a velocity $v \in S^2$ falling on a body $B$ at rest. The flow is so rarefied that the particles do not mutually interact. Particles reflect elastically when colliding with the body surface and move freely between consecutive collisions. The problem of minimal resistance going back to Newton \cite{N} consists in { finding} a body, from a given class of bodies, that experiences the smallest possible force of flow pressure, or resistance force. Since 1990's, many interesting results in this problem have  been obtained by various authors (see, e.g., \cite{BB,BK,CL1,LO,LP1,RMS_review,ESAIM}). A body of zero resistance has been { provided} in \cite{0-resist}; this means that the final velocity of any particle incident on the body coincides with the velocity of incidence $v$.

In this paper a body having zero resistance in two directions is constructed, and it is proved that bodies having zero resistance in all directions do not exist. Notice that invisibility implies zero resistance, therefore any body invisible in two directions will have zero resistance in these directions, and impossibility of zero resistance in all directions implies that invisibility in all directions is also impossible.

There are many questions still open. { Do there exist} bodies invisible/having zero resistance in three or more directions, or even in a set of directions of positive measure? We suppose that the answer to the last part of the question is negative, but cannot prove it.

We start with exact definitions.

\begin{opr}\label{o 1}
A body \rm is a bounded set with a piecewise smooth boundary in $\RRR^3$.
\end{opr}

Consider the billiard in $\RRR^3 \setminus B$, and take a convex body $C$ containing $B$. For { a regular point of the boundary} $\xi \in \pl C$, denote by $n(\xi)$ the unit outer normal to $\pl C$ at $\xi$. Introduce the measurable spaces $(\pl C \times S^2)_\pm := \{ (\xi,v) \in \pl C \times S^2 :\, \pm n(\xi) \cdot v \ge 0 \}$ equipped with the measures $d\mu_{\pm}(\xi,v) = \pm (n(\xi) \cdot v)\, d\xi\, dv$, correspondingly, where dot means scalar product. { Note that the set of singular points of any convex set has zero Lebesgue measure, therefore the union $(\pl C \times S^2)_- \cup (\pl C \times S^2)_+$ is a full measure set in $\pl C \times S^2$.}

The motion of a billiard particle interacting with the body $B$ can be generally described as follows. First the particle moves freely with a velocity $v$, then intersects $\pl C$ at a point $\xi$ and moves in $C$ making reflections from $\pl B$, and finally, leaves $C$ at a point $\xi^+$ and moves freely with a velocity $v^+$ afterwards (see Fig.~\ref{fig scatt}). According to this description, a mapping $(\xi,v) \mapsto (\xi^+ = \xi^+_{B,C}(\xi,v),\, v^+ = v^+_{B,C}(\xi,v))$ is defined, which is a measure preserving one-to-one correspondence between full measure subsets of $(\pl C \times S^2)_-$ and $(\pl C \times S^2)_+$.

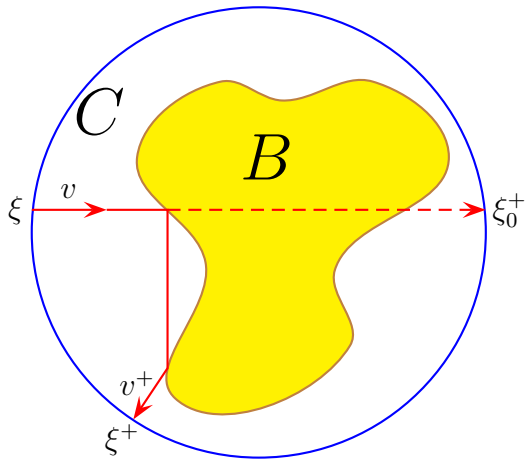
\begin{figure}[h]
\begin{picture}(0,175)
\rput(7.7,3){
\pscircle[linecolor=blue](0,0){3}
\psecurve[linecolor=brown,fillstyle=solid,fillcolor=yellow]
(1.2,-1.5)(1,-0.5)(2.5,1)(1.5,2)(0.3,1.75)(-0.5,2)(-1.6,1)(-0.7,-0.4)
(-1.1,-2.2)(1.2,-1.5)(1,-0.5)(2.5,1)
\psline[linecolor=red,arrows=->,arrowscale=2](-2.985,0.3)(-2,0.3)
\rput(-3.2,0.3){$\xi$}
\rput(3.3,0.3){$\xi^+_0$}
\rput(-1.8,-2.75){$\xi^+$}
\rput(-1.6,-2){$v^+$}
\rput(-2.5,0.55){$v$}
\rput(0.1,1){\Huge $B$}
\rput(-2.1,1.6){\Huge $C$}
\psline[linecolor=red,arrows=->,arrowscale=2](-2,0.3)(-1.2,0.3)(-1.2,-1.8)(-1.65,-2.48)
\psline[linecolor=red,linestyle=dashed,arrows=->,arrowscale=2](-1.2,0.3)(2.985,0.3)
}
\end{picture}
\caption{ The broken line with the endpoints $\xi$ and $\xi^+$ is a billiard trajectory in the complement of $B$. The straight line with the endpoints $\xi$ and $\xi_0^+$ is a trajectory corresponding to the case $B = \emptyset$.}
\label{fig scatt}
\end{figure}

Note that for a zero measure set of values $(\xi,v) \in  (\pl C \times S^2)_-$, the corresponding particle hits $\pl B$ at a singular point, or gets trapped in $C$, or makes infinitely many reflections in a finite time. For these values the mapping is not defined.

For future use we introduce the notation $\xi^+_0 := \xi^+_{\emptyset,C}$, corresponding to the case $B = \emptyset$ where all particles move freely inside $C$; see Fig.~\ref{fig scatt}.

\begin{opr}\label{o 2}

\rm (a) We say that the body $B$ {\it has zero resistance in the direction} $v$, if $v^+_{B,C}(\xi,v) = v$ for all $\xi$ (see Fig.
\ref{fig zerores invis} (a)).

(b) We say that the body $B$ {\it is invisible in the direction} $v$, if it has zero resistance in this direction and, additionally, $\xi^+_{B,C}(\xi,v) - \xi$ is parallel to $v$ (see Fig.~\ref{fig zerores invis} (b)).

(c) Let $A \subset S^2$. The body $B$ is said to be {\it invisible/have zero resistance in the set of directions} $A$, if it is invisible/has zero resistance in any direction $v \in A$.
\end{opr}

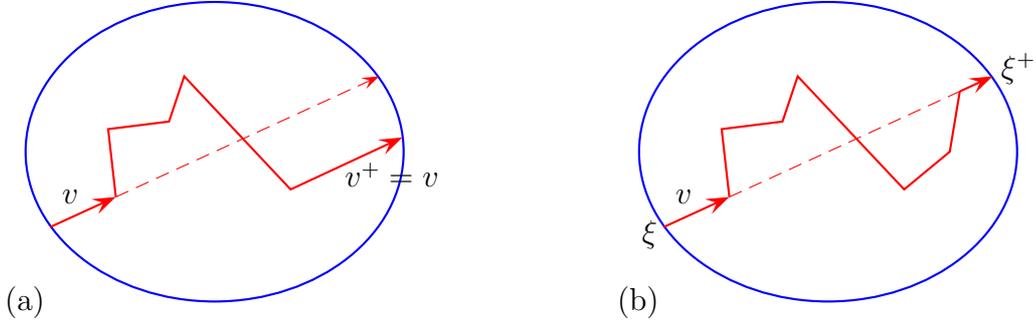
\begin{figure}[h]
\begin{picture}(0,160)
\rput(4,3){
\scalebox{1}{
\psellipse[linecolor=blue](0,0)(2.5,2)
\psline[linecolor=red,arrows=->,arrowscale=2](-2.165,-1)(-1.299,-0.6)
\psline[linecolor=red,linestyle=dashed,linewidth=0.4pt,arrows=->,arrowscale=2](-1.299,-0.6)(2.165,1)
\psline[linecolor=red,arrows=->,arrowscale=2](-1.299,-0.6)(-1.4,0.3)(-0.6,0.4)(-0.4,1)(1,-0.5)(2.47,0.19)
\rput(2.33,-0.3){$v^+=v$}
\rput(-1.9,-0.6){$v$}
\rput(-2.5,-2){(a)}
\rput(8,0){
\psellipse[linecolor=blue](0,0)(2.5,2)
\psline[linecolor=red,arrows=->,arrowscale=2](-2.165,-1)(-1.299,-0.6)
\psline[linecolor=red,arrows=->,arrowscale=2](1.732,0.8)(2.165,1)
\psline[linecolor=red,linestyle=dashed,linewidth=0.4pt](-1.299,-0.6)(1.732,0.8)
\psline[linecolor=red](-1.299,-0.6)(-1.4,0.3)(-0.6,0.4)(-0.4,1)(1,-0.5)(1.6,0)(1.732,0.8)
\rput(-1.9,-0.6){$v$}
\rput(-2.35,-1.1){$\xi$}
\rput(2.5,1.1){$\xi^+$}
\rput(-2.5,-2){(b)}
}
}}
\end{picture}
\caption{A typical billiard path in the case of a body (a) having zero resistance in the direction $v$; (b) invisible in the direction $v$. The body is not shown in both cases.}
\label{fig zerores invis}
\end{figure}

One easily sees that these definitions do not depend on the choice of the ambient body $C$.

{ The plan of the paper is as follows. In section 2 we construct bodies of zero resistance {\it in two directions} and bodies invisible {\it in two directions}. In section 3 we prove that bodies invisible {\it in all directions} and bodies of zero resistance {\it in all directions} do not exist.}

\section{Bodies invisible in two directions}

\begin{theorem}\label{t 2 directions}
For any two mutually perpendicular directions $v_1$ and $v_2 \in S^2$,

(a) there exists a body having zero resistance in both directions;

(b) there exists a body invisible in these directions.
\end{theorem}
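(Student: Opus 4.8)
The plan is to build the body as a union of finitely many disjoint ``channel'' pieces, each of which independently handles one of the two flows, using the known one-direction construction from \cite{0-resist} as a building block. First I would recall the basic one-direction invisible body: a bounded region $U$ whose mirror boundary sends every vertical ray entering the top face, after a bounded number of reflections, out through the bottom face along the same vertical line (for invisibility) or at least with the same vertical velocity (for zero resistance). The essential observation is that such a $U$ can be taken to be contained in a thin slab and, more importantly, that its \emph{complement of activity} is large: away from a thin neighbourhood of the slab the body does not touch the vertical flow at all. So if I rescale $U$ to live inside a very flat box $B_1$ aligned so that its symmetry axis is $v_1$, it is invisible in direction $v_1$, and in direction $v_2$ (perpendicular to $v_1$) a generic $v_2$-ray either misses $B_1$ entirely or grazes only a negligible part of it.

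The key step is then to arrange two such flattened copies, $B_1$ (thin in the $v_1$-direction, handling the $v_1$-flow) and $B_2$ (thin in the $v_2$-direction, handling the $v_2$-flow), so that they are disjoint and so that the $v_1$-rays never enter $B_2$ and the $v_2$-rays never enter $B_1$. Because $v_1 \perp v_2$, I can stack the construction: let the ``working region'' of $B_1$ occupy a thin horizontal slab $\{|x\cdot v_1|\le \e\}$ and the working region of $B_2$ occupy a thin slab $\{|x\cdot v_2|\le\e\}$, and translate them far apart along the third, mutually orthogonal, coordinate $v_3 = v_1\times v_2$ so the two slabs, truncated to a bounded box, become disjoint. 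A $v_1$-ray travels parallel to $v_1$; it meets the slab of $B_2$ (which is not thin in the $v_1$-direction) but — here is the point — I choose $B_2$'s actual boundary surface to lie entirely inside its own thin slab $\{|x\cdot v_2|\le \e\}$, and a $v_1$-ray that is not within distance $\e$ of that slab never touches $B_2$; the exceptional rays form a set of measure $O(\e)$ in the $(\xi,v)$ sense, and for a \emph{fixed} direction $v_1$ they form a measure-zero set of entry points $\xi$, which by Definition \ref{o 2} we may ignore only if the definition tolerated exceptional $\xi$ — since it does not, I instead push $B_2$ to infinity relative to the box $C$, i.e. simply place the two bodies so that within the chosen $C$ the $v_1$-flow literally never reaches $B_2$ and vice versa. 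Concretely: take $C$ a large ball; put $B_1$ and $B_2$ as two disjoint flattened invisible bodies positioned so that the orthogonal projection of $B_2$ onto the plane $v_1^{\perp}$ is disjoint from the projection of (the relevant part of) the $v_1$-flow's support — impossible for \emph{all} $\xi\in\pl C$, so this forces the refinement below.

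The refinement, which I expect to be the main obstacle, is to make a \emph{single} body that is simultaneously transparent-by-identity to both flows on \emph{all} of $\pl C$. The way to do this is not to separate $B_1$ and $B_2$ in space but to make each of them individually invisible in \emph{both} directions: take $B_1$ to be a one-direction invisible body for $v_1$ that additionally is ``flat enough'' that it is also trivially invisible for $v_2$ because every $v_2$-ray passes through it without reflection — this holds if $B_1$'s boundary, viewed in the $v_2$-direction, has slope bounded so that no $v_2$-ray can hit it, which fails for a generic surface. Instead, the clean route: use the construction of \cite{0-resist} in the plane $v_1 v_3$, extruded trivially along $v_2$, so $B_1 = D_1\times(\text{interval in }v_2)$ where $D_1\subset\mathbb{R}^2$ is a planar invisible (for $v_1$) billiard region; such a prism is invisible in $v_1$, and a $v_2$-ray, travelling parallel to the generator of the prism, either misses $D_1\times(\cdot)$ entirely or enters and exits through the two flat faces perpendicular to $v_2$ with velocity unchanged and displacement parallel to $v_2$ — hence the prism is \emph{also} invisible in $v_2$. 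This single prism $B_1$ already proves (b), and a fortiori (a); no second body is needed. So the proof reduces to: (i) recall the planar invisible region $D_1$ from \cite{0-resist}; (ii) extrude it along $v_2$ into a prism $B$; (iii) verify invisibility in $v_1$ from the planar result plus translation-invariance along $v_2$; (iv) verify invisibility in $v_2$ because rays parallel to the prism axis pass straight through between the two transverse flat caps. Step (iv)'s only subtlety — the set of $v_2$-rays that hit the \emph{lateral} (non-flat) boundary $\pl D_1\times(\cdot)$ — is handled by noting such rays are tangent to the lateral surface, a measure-zero exceptional set, and in fact by capping the prism's lateral surface so that all $v_2$-rays entering the relevant window of $\pl C$ see only the two flat caps; this capping is the one genuinely fiddly geometric point, and I would spend most of the write-up on it.
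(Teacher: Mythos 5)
Your final construction---extrude a planar invisible region $D_1$ along $v_2$ into a prism and declare it invisible in $v_2$ because axis-parallel rays ``enter and exit through the two flat faces''---contains a fatal error: the body has a \emph{mirror} surface, so a billiard particle travelling in the direction $v_2$ that meets a flat cap perpendicular to $v_2$ does not pass through the body; it reflects straight back with velocity $-v_2$. The prism therefore has essentially maximal, not zero, resistance in the direction of its own axis, and your step (iv) collapses. (Your earlier attempt, placing two separated flattened copies so that each flow misses the other body, fails for the reason you yourself half-acknowledge: a body of positive volume casts a positive-area shadow in every direction, so no placement inside $C$ can hide $B_2$ from the entire $v_1$-flow.)

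The paper's construction starts exactly where you do---a planar zero-resistance figure bounded by arcs of two confocal parabolas, extruded along $v_2$ into a prism $B_1$---but then takes the \emph{intersection} $B=B_1\cap B_2$ with the copy $B_2$ obtained by rotating $B_1$ by $\pi/2$ about the third axis perpendicular to $v_1$ and $v_2$. This carving removes the offending flat caps: every cross-section of $B$ by a plane orthogonal to $v_2$ is again a union of two planar zero-resistance trapezia whose curved sides have outer normals parallel to that plane, and symmetrically for planes orthogonal to $v_1$, so $B$ has zero resistance in both directions. Separately, you conflate zero resistance with invisibility: the parabolic construction alone only returns the velocity, not the line (the outgoing ray is laterally displaced), and part (b) requires the further step of taking the union of four copies of $B$ shifted by $0$, $hv_1$, $hv_2$ and $hv_1+hv_2$, so that a second passage through a translated copy restores the original straight line. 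Both key ideas---the intersection with the rotated copy and the four-fold union---are missing from your proposal.
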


\begin{proof}
We first construct a basic two-dimensional body and show that it has got zero resistance in one direction, and then extend the construction to three dimensions.

Take a plane $\Pi$ containing $v_1$ and perpendicular to $v_2$, and consider two parabolas in this plane with common axis parallel to $v_1$ and with common focus and centrally symmetric to each other with respect to the focus. Take two straight lines in the same plane parallel to the common axis of the parabolas and situated at the same distance on both sides of it. Next, consider two curvilinear triangles formed by segments of these straight lines and by arcs of the parabolas, see Fig.~\ref{fig parabolas} (a). The union of these triangles is a (disconnected) two-dimensional figure having zero resistance to a parallel flow on the plane falling at the velocity $v_1$. Indeed, taking into account the focal property of parabola, we see that any incident particle of the flow, after reflecting from a parabola, passes through the focus, then reflects from the other parabola, and moves afterwards with the velocity $v_1$. That is, a parallel flow with velocity $v_1$ is transformed into a parallel flow with the same velocity.

\begin{figure}[h]
\begin{picture}(0,160)
\rput(3.5,3){
\scalebox{1.2}{

\psframe[linewidth=0pt,linecolor=white,fillstyle=solid,fillcolor=blue](-2,-1.49)(2,1.49)
\psframe[linewidth=0pt,linecolor=white,fillstyle=solid,fillcolor=blue](-2,-1.49)(2,1.49)
\parabola[linewidth=0.4pt,linecolor=black,fillstyle=solid,fillcolor=white](2,1.5)(0,-0.5)
\parabola[linewidth=0.4pt,linecolor=black,fillstyle=solid,fillcolor=white](2,-1.5)(0,0.5)
\parabola[linewidth=0.8pt,linecolor=white](1,0)(0,0.5)
\parabola[linewidth=0.4pt,linecolor=black,linestyle=dashed](2,1.5)(0,-0.5)
\parabola[linewidth=0.4pt,linecolor=black,linestyle=dashed](2,-1.5)(0,0.5)
\psline[linewidth=0.4pt,linecolor=black](2,1.5)(2,-1.5)
\psline[linewidth=0.4pt,linecolor=black](-2,1.5)(-2,-1.5)
\psline[linewidth=0.4pt,linecolor=red,arrows=->,arrowscale=1.6](-1.8,2.3)(-1.8,1.5)
\psline[linewidth=0.4pt,linecolor=red,arrows=->,arrowscale=1.6](-1.2,2.3)(-1.2,1.5)
\psline[linewidth=0.4pt,linecolor=red,arrows=->,arrowscale=1.6](-1.8,1.5)(-1.8,1.12)(1.8,-1.12)(1.8,-2)
\psline[linewidth=0.4pt,linecolor=red,arrows=->,arrowscale=1.6](-1.2,1.5)(-1.2,0.22)(1.2,-0.22)(1.2,-2)
\psdots[dotsize=3pt](0,0)
\psline[linewidth=0.6pt,linecolor=red,arrows=->,arrowscale=1.6](-2.5,0.5)(-2.5,-0.5)
\rput(-2.8,0){$v_1$}
\rput(-2.4,-2){(a)}
\rput(8,0){
\psframe[linewidth=0pt,linecolor=white,fillstyle=solid,fillcolor=blue](-2,-1.49)(-1.6,1.49)
\psframe[linewidth=0pt,linecolor=white,fillstyle=solid,fillcolor=blue](1.6,-1.49)(2,1.49)
\parabola[linewidth=0.4pt,linecolor=black,fillstyle=solid,fillcolor=white](2,1.5)(0,-0.5)
\parabola[linewidth=0.4pt,linecolor=black,fillstyle=solid,fillcolor=white](2,-1.5)(0,0.5)
\parabola[linewidth=0.8pt,linecolor=white](1.6,-0.78)(0,0.5)
\parabola[linewidth=0.8pt,linecolor=white](1.6,0.78)(0,-0.5)
\parabola[linewidth=0.4pt,linecolor=black,linestyle=dashed](2,1.5)(0,-0.5)
\parabola[linewidth=0.4pt,linecolor=black,linestyle=dashed](2,-1.5)(0,0.5)
\psline[linewidth=0.4pt,linecolor=black](2,1.5)(2,-1.5)
\psline[linewidth=0.4pt,linecolor=black](-2,1.5)(-2,-1.5)
\psline[linewidth=0.4pt,linecolor=black](1.6,0.78)(1.6,-0.78)
\psline[linewidth=0.4pt,linecolor=black](-1.6,0.78)(-1.6,-0.78)
\rput(-2.4,-2){(b)}
}
}}
\end{picture}
\caption{Two-dimensional figures invisible in one direction: (a) A union of two triangles. (b) A union of two trapezia.}
\label{fig parabolas}
\end{figure}
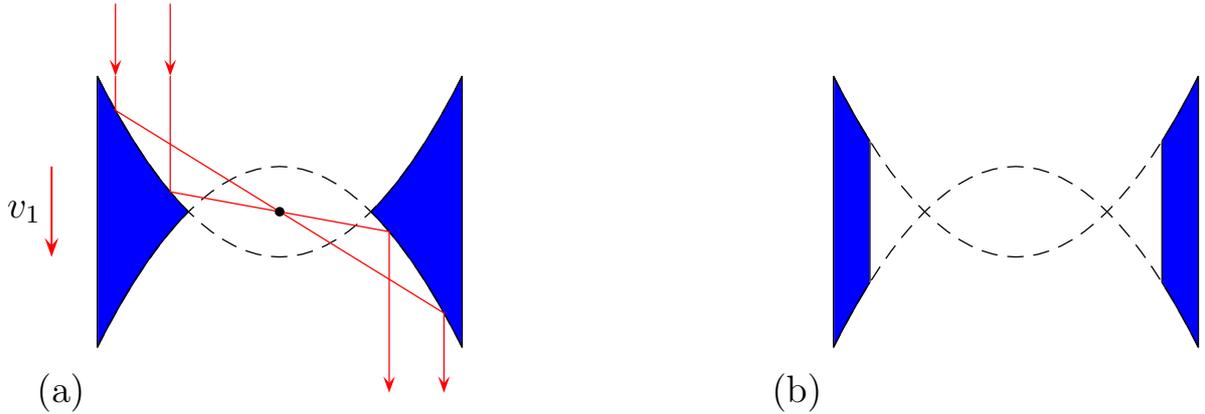

Note also that the union of two trapezia bounded by arcs of the parabolas and by two pairs of straight lines (where two lines in each pair are { parallel to the axis and} symmetric to each other with respect to { it}) is also a figure of zero resistance in the direction $v_1$ (see Fig.~\ref{fig parabolas} (b)).

If we choose a basis in the two-dimensional space $\Pi$ in a way that $v_1=(0,-1)$ { and the origin coincides with the focus}, the bodies described above are given by the following parametric family
$$
B(\alpha,\beta,\gamma) = \left\{ (x,y)\,\Bigl|\, |y|\leq \alpha x^2-\frac{1}{4\alpha}, \;\gamma\leq |x|\leq \beta \right\},
$$
where the { positive} parameters $\alpha,\beta$ and $\gamma$ are such that $2\alpha\beta>1$ and $\gamma <\beta$. The ``triangular'' construction then corresponds to $\gamma\leq \frac{1}{2\alpha}$, and the ``trapezial'' one -- to $\gamma>\frac{1}{2\alpha}$.

Then we obtain a three-dimensional body $B_1$ invisible in the same direction $v_1$ by parallel translation of the two-dimensional figure of Fig.~\ref{fig parabolas} (a) in the direction $v_2$ orthogonal to the plane of the figure (see Fig.~\ref{fig:B1B2} (a)). The length $h$ of this translation is equal to the height of the figure (that is, to the length of the rectilinear side of a triangle). Then we construct another body $B_2$ by rotating $B_1$ by $\pi/2$ around { its symmetry axis} perpendicular to $v_1$ and $v_2$ (see Fig.~\ref{fig:B1B2} (b)). The resulting body $B_2$ has zero resistance in the direction $v_2$. Finally, we show that the body $B=B_1\cap B_2$ (see Fig.~\ref{fig:B1B2} (c)) has zero resistance in both directions $v_1$ and $v_2$.

\begin{figure}[h]
\centering
\includegraphics[height=160pt, keepaspectratio]{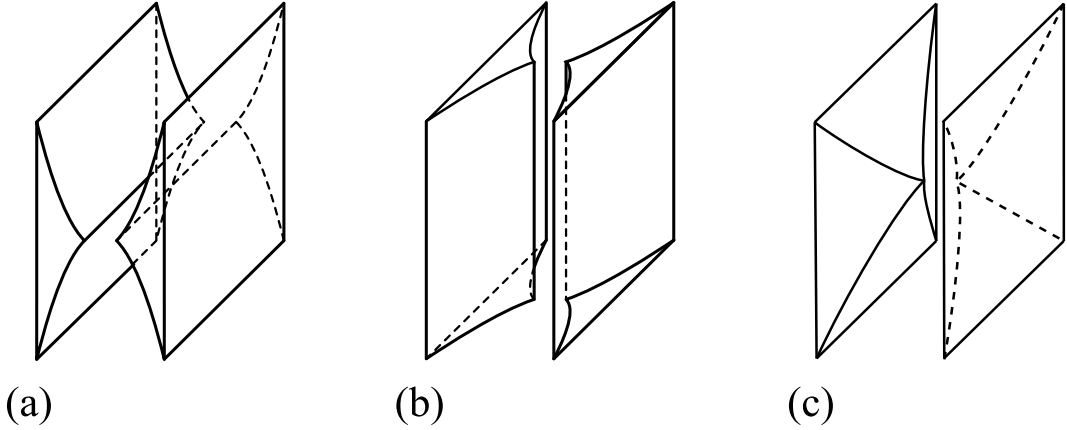}
\caption{Construction of a body of zero resistance in two directions}\label{fig:B1B2}
\end{figure}

Indeed, the intersection of $B$ with any plane parallel to $\Pi$ is a union of two curvilinear trapezia, besides { the} outer normal vector to $\pl B$ at { any point of a} curvilinear side of the trapezia is parallel to $\Pi$. Therefore any incident particle that initially moves in this plane with the velocity $v_1$, after two reflections from curvilinear sides of the trapezia will eventually move in the same plane and with the same velocity $v_1$. Therefore $B$ has zero resistance in the direction $v_1$. For $v_2$ the argument is the same.

Again, we can give a more rigorous algebraic representation of $B$. Choose a basis in $\RRR^3$ in a way that $v_1=(0,-1,0)$,\, $v_2=(0,0,-1)$, { and the origin coincides with the center of symmetry of the body}.
Then
$$
B(\alpha,\beta,\gamma) = \left\{ (x,y,z)\,\Bigl|\, |y|,|z|\leq \alpha x^2-\frac{1}{4\alpha}, \;\gamma\leq |x|\leq \beta \right\},
$$
where the parameters $\alpha,\beta$ and $\gamma$ are the same as before.

To obtain a body invisible in the directions { $v_1$ and $v_2$}, it suffices to take a union of 4 identical bodies obtained from $B$ by shifts by 0, $hv_1, \ hv_2$, and $hv_1 + hv_2$ (see Fig.~\ref{fig:Invis}).

\begin{figure}[h]
\centering
\includegraphics[height=160pt, keepaspectratio]{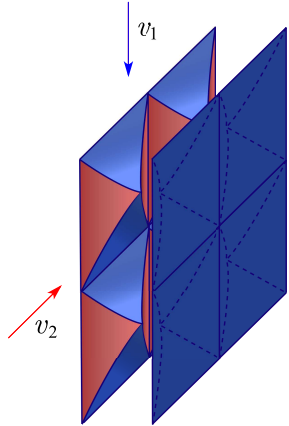}
\caption{A body invisible in two directions}\label{fig:Invis}
\end{figure}

\end{proof}

\section{Non-existence of bodies invisible in all directions}

\begin{theorem}\label{t all directions}
There do not exist bodies that

(a) are invisible in all directions;

(b) have zero resistance in all directions.
\end{theorem}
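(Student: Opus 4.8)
The plan is to prove part (b) — no body has zero resistance in all directions — and to observe that (a) follows at once, since invisibility in a direction implies zero resistance in it, so a body invisible in every direction would have zero resistance in every direction. First I would dispose of degenerate shapes: if $B$ lies in a $2$-plane, then for a generic $v$ a generic incident particle is reflected once, off the tangent plane, with changed velocity, so zero resistance already fails; thus $\operatorname{conv}(\overline B)$ may be assumed three-dimensional, and we may take $B$ to be a solid, $B=\overline{\operatorname{int}B}$ (a lower-dimensional part of $\partial B$ met transversally by a positive-measure family of rays would itself produce a deflection).

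The main idea is a ``topmost point'' argument. For a.e. $w\in S^2$ the support function $h$ of $K:=\operatorname{conv}(\overline B)$ is differentiable at $w$, so $x_0=x_0(w):=\nabla h(w)$ is the unique maximiser of $x\cdot w$ over $B$ and belongs to $B$. Put $v=-w$ and look at the parallel flow in direction $v$. Since $x_0$ is the unique maximiser and $\overline B$ is compact, for every small $\rho$ there is $\varepsilon>0$ with $B\cap\{x\cdot w>h(w)-\varepsilon\}\subset B_\rho(x_0)$; hence every incident particle whose line passes sufficiently close to the line through $x_0$ in direction $v$ meets $B$ for the first time inside this tiny cap, and such particles fill a set of positive $\mu_-$-measure. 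If, moreover, $x_0$ is a regular point of $\partial B$, the outer normal there is $w$; writing $\partial B$ near $x_0$ as a graph $x\cdot w=f(x_\perp)$ with $f(0)=h(w)$, $\nabla f(0)=0$, a short computation (the reflected direction $v_1$ satisfies $v_1\cdot w=-1+2/(1+|\nabla f|^2)>0$, and one checks the reflected ray cannot return to $\{x\cdot w\le h(w)\}$) shows the particle leaves with $v^+=v_1\neq v$. This contradicts zero resistance in direction $v$, and we are done in this case.

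So the crux is what happens when $x_0(w)$ is a singular point of $\partial B$ for almost all $w$ — equivalently when $\nabla h$ maps a full-measure set of directions into the singular set of $\partial B$, i.e. when $K$ has no two-dimensional strictly convex boundary piece (a polytope, a cylinder, a cone, or a hybrid of these). The hard part will be the local analysis of the scattering produced by a small solid cap of $B$ sitting at a singular extreme point $p$ of $K$ (a vertex, or a point of an edge): because this cap is trapped inside the proper tangent cone of $K$ at $p$, an incident ray can reflect only finitely many times inside it and must leave with a deflection that no later reflection can cancel; carrying this out while keeping track of edges, vertices and possible grooves is the technical heart. The same point explains why the analogous question for a set of directions of positive measure stays open: the dichotomy above yields a contradiction only when $K$ is non-degenerate for a positive measure of directions, which a single body can avoid.

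An alternative, softer route settles part (a) by itself and bypasses the delicate local step. Fix a large ball $C\supset B$. A Santaló-type identity for the billiard in $C\setminus B$ gives $\int_{(\partial C\times S^2)_-}\ell_B\,d\mu_- = c_3\operatorname{Vol}(C\setminus B)$, where $\ell_B(\xi,v)$ is the length of the billiard path from entry to exit; for $B=\varnothing$ this is $c_3\operatorname{Vol}(C)$. If $B$ is invisible in all directions, each billiard path begins and ends on one and the same straight line, so $\ell_B\ge\ell_\varnothing$ pointwise a.e.; integrating forces $\operatorname{Vol}(C\setminus B)\ge\operatorname{Vol}(C)$, hence $\operatorname{Vol}(B)=0$ and a.e. incident particle misses $B$ for a.e. direction. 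But by a Crofton-type estimate a piecewise smooth surface of positive area has a shadow of positive area in a positive-measure set of directions, giving genuine reflections there and contradicting invisibility; and a set of zero area reflects nothing, so $B=\varnothing$.
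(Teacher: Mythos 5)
Your ``alternative, softer route'' for part (a) is essentially the paper's own argument: the Santal\'o-type identity $\int_{(\pl C\times S^2)_-}\tau\,d\mu_- = 4\pi|C\setminus B|$ is exactly the phase-volume computation used there, and the pointwise comparison $\tau\ge|\xi^+-\xi|=|\xi_0^+-\xi|$ for an invisible body gives $4\pi|C|\le 4\pi|C\setminus B|$ (the paper gets strict inequality directly, since a positive-volume body forces a positive-measure set of trajectories to bend; your detour through $|B|=0$ plus a Crofton argument also works, if slightly more laborious). So part (a) is fine.

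Part (b), however, has a genuine gap. Your primary route --- the topmost-point argument --- works cleanly only when $x_0(w)=\nabla h(w)$ is a regular point of $\pl B$ for a positive measure of $w$; you yourself flag the complementary case (singular extreme points for a.e.\ direction: vertices, edges, cones, grooves) as ``the technical heart'' and do not carry it out, so no contradiction is actually derived there. Your secondary route does not apply to (b), because for a zero-resistance body the exit point $\xi^+$ need not lie on the incident line, so the pointwise bound $\tau(\xi,v)\ge|\xi_0^+-\xi|$ is unavailable. The missing idea, which is how the paper closes this, is to exploit measure preservation \emph{for each fixed $v$}: since $v^+=v$, the exit map induces a map $\pl C_v^-\to\pl C_v^+$ preserving the projected measure $|n(\xi)\cdot v|\,d\xi$, i.e.\ a Lebesgue-measure-preserving transformation $\s$ of the shadow domain $\Om$ once $\pl C_v^\pm$ are written as graphs $\xi_3=f^\pm(\xi_1,\xi_2)$. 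Then $\tau\ge f^+(\s(\xi_1,\xi_2))-f^-(\xi_1,\xi_2)$, and integrating over $\Om$ the measure preservation turns the right-hand side into $\int_\Om(f^+-f^-)$, which is exactly the $B=\emptyset$ value; integrating over $v$ recovers $4\pi|C|\le$ reachable phase volume $\le 4\pi|C\setminus B|$ and the same contradiction. This global, integral argument sidesteps entirely the regular/singular case analysis on which your local approach founders.
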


\begin{proof}
Let us first outline the idea of proof. { Note that statement (b) of the theorem implies statement (a), but for methodological reasons we first prove (a), and then (b).}

The phase space of the billiard in $C\setminus B$ is $(C \setminus B) \times S^2$, with the coordinate $(x,v)$ and the element of Liouville phase volume $dx\, dv$. Taking into account that the area of unit sphere is $|S^2| = 4\pi$, we get that the volume of phase space equals $4\pi |C \setminus B|$.

The phase volume can be estimated in a different way. Summing up the lengths of all billiard trajectories { (of course summation amounts to integration over the initial data)}, we get the volume of the reachable part of the phase space. Comparing the case of an invisible body $B$ (assuming that such a body exists) with the case $B = \emptyset$ (where the body is absent) and comparing the lengths of trajectories with identical initial data, we see that the length of the trajectory in the first case is always greater or equal than in the second one, therefore the phase volume is also greater in the first case, $4\pi |C \setminus B| \ge 4\pi |C|$. This is a contradiction.

The case of the body $B$ of zero resistance is a little bit more complicated. We also compare it with the case $B = \emptyset$ and show that the sum of the lengths of billiard trajectories with the fixed initial velocity in the first case is greater than in the second one. Then, summing up over all initial velocities, { again} we come to the conclusion that the phase volume in the first case is greater or equal than in the second one.

Let us pass to a more precise exposition. Suppose a billiard particle starts the motion at a point $\xi \in \pl C$ and with the initial velocity $v \in S^2$ turned inside $C$ (which means that $n(\xi) \cdot v \le 0$), and let $t \ge 0$; then assign the new coordinate $(\xi,v,t)$ to the point of phase space reached by the particle in the time $t$. The element of phase volume then takes the form $(-n(\xi) \cdot v)\, d\xi\, dv\, dt = d\mu_-(\xi,v)\, dt$. Further, denote by $\tau(\xi,v)$ the length of the particle's trajectory inside $C$, from the starting point $\xi$ until the point $\xi^+ = \xi_{B,C}^+(\xi,v)$ where it leaves $C$. Recall that $(\pl C \times S^2)_\pm = \{ (\xi,v) \in \pl C \times S^2 : \, \pm n(\xi) \cdot v \ge 0 \}$. Then the volume of the reachable part of phase space equals
$$
{ \int_{(\pl C \times S^2)_-} \int_0^{\tau(\xi,v)}\, dt\, d\mu_-(\xi,v) =} \int_{(\pl C \times S^2)_-} \tau(\xi,v)\, d\mu_-(\xi,v),
$$
{ Recall that $\xi = \xi_{B,C}^+(\xi,v)$.} Taking into account that the distance between the initial and final points of the trajectory does not exceed its length,
\beq\label{1}
|\xi^+ - \xi| \le \tau(\xi,v),
\eeq
and at some points { $(\xi,v)$ (and therefore in their neighborhoods)} the inequality in (\ref{1}) is strict,  we get
\beq\label{2}
\int_{(\pl C \times S^2)_-} |\xi^+ - \xi|\, d\mu_-(\xi,v) < 4\pi |C \setminus B|.
\eeq

Now let $\xi^+_0 = \xi^+_0(\xi,v)$ be the point where the particle leaves $C$ in the case $B = \emptyset$. In other words, $\xi^+_0$ is the point of intersection of the ray $\xi + vt,\, t > 0$ with $\pl C$. In this case all the phase space is reachable, besides one has equality in (\ref{1}), therefore in place of (\ref{2}) one gets the equality
\beq\label{3}
\int_{(\pl C \times S^2)_-} |\xi^+_0 - \xi|\, d\mu_-(\xi,v) = 4\pi |C|.
\eeq
If $B$ is invisible in all directions then $\xi^+_0 = \xi^+$, therefore from (\ref{2}) and (\ref{3}) one gets
$$
4\pi |C| < 4\pi |C \setminus B|,
$$
which is a contradiction.

Now let $B$ have zero resistance in all directions, that is, $v^+_{B,C}(\xi,v) = v$ for all $\xi$ and $v$. Denote by $\pl C_v^\pm$ the set of points $\xi$ such that $\pm n(\xi) \cdot v \ge 0$ with the induced measure $(\pm n(\xi) \cdot v)\, d\xi$. Since the mapping $(\xi,v) \mapsto (\xi_{B,C}^+(\xi,v),v)$ from $((\pl C \times S^2)_-, \ \mu_-)$ to $((\pl C \times S^2)_+, \ \mu_+)$ preserves the measure, we conclude that the induced mapping $\xi \mapsto \xi_{B,C}^+(\xi,v)$ from $\pl C_v^-$ to $\pl C_v^+$ preserves the induced measure for almost every $v$. Fix $v$ and introduce an orthogonal coordinate system $\xi_1,\, \xi_2,\, \xi_3$ in such a way that $v$ takes the form $v = (0,0,1)$. Then the subsets $\pl C_v^\pm$ take the form
$$
\pl C_v^\pm = \{ (\xi_1, \xi_2, \xi_3) : (\xi_1, \xi_2) \in \Om,\, \xi_3 = f^\pm(\xi_1,\xi_2) \},
$$
where $\Om$ is a convex domain in $\RRR^2$, $f^-$ is a convex function on $\Om$,\, $f^+$ is a concave function on $\Om$, and $f^- \le f^+$. Then both measures  $(\pm n(\xi) \cdot v)\, d\xi$ on $\pl C_v^\pm$ take the form $d\xi_1\, d\xi_2$, and the mapping $\xi \mapsto \xi_{B,C}^+(\xi,v)$ takes the form $(\xi_1, \xi_2, f^-(\xi_1,\xi_2)) \mapsto (\s(\xi_1,\xi_2), f^+(\s(\xi_1,\xi_2)))$, where $\s$ is a transformation of $\Om$ preserving the Lebesgue measure $d\xi_1\, d\xi_2$; see Fig.~\ref{fig Cavalieri}.

\begin{figure}[h]
\begin{picture}(0,210)
\rput(8,4){
\psecurve[linecolor=blue](3,-1.65)(3.8,0)(3,1.65)(2,2.2)(1,2.42)(0,2.5)(-1,2.42)(-2,2.2)(-3,1.65)(-3.8,0)
(-3,-1.65)(-2,-2.2)(-1,-2.42)(0,-2.5)(1,-2.42)(2,-2.2)(3,-1.65)(3.8,0)(3,1.65)
\psline[linecolor=brown](-4.8,-3.3)(4.5,-3.3)
\psline[linecolor=brown,arrows=->,arrowscale=1.5](-4.8,-3.3)(-4.8,2.3)
\rput(-5.2,2.1){$\xi_3$}
\psline[linecolor=red,arrows=->,arrowscale=2](-3,-1.65)(-3,1.65)
\psline[linecolor=brown,linestyle=dashed](-1,-2.42)(-1,-3.3)
\psline[linecolor=red,arrows=->,arrowscale=2](-2,-2.2)(-2,2.2)
\psline[linecolor=red,arrows=->,arrowscale=2](-1,-2.42)(-1,2.42)
\psline[linecolor=red,arrows=->,arrowscale=2](0,-2.5)(0,2.5)
\psline[linecolor=red,arrows=->,arrowscale=2](3,-1.65)(3,1.65)
\psline[linecolor=brown,linestyle=dashed](2,-2.2)(2,-3.3)
\psline[linecolor=red,arrows=->,arrowscale=2](2,-2.2)(2,2.2)
\psline[linecolor=red,arrows=->,arrowscale=2](1,-2.42)(1,2.42)
\psline[linecolor=red,linestyle=dashed,arrows=->,arrowscale=2](-1,-1.9)(-0.6,-1.4)(0.3,-1.5)(1.5,-1)
\psline[linecolor=red,linestyle=dashed](1.5,-1)(2.3,0.5)(2,1.4)
\rput(-1,-3.6){$(\xi_1,\xi_2)$}
\rput(2,-3.65){$\s(\xi_1,\xi_2)$}
\rput(4,2.3){$f^+(\xi_1,\xi_2)$}
\rput(4,-2.2){$f^-(\xi_1,\xi_2)$}
\psdots[dotsize=3pt](-3.8,0)(3.8,0)(-1,-3.3)(2,-3.3)
\rput(-3.4,-2.2){\Large $\pl C^-_v$}
\rput(-3.6,2){\Large $\pl C^+_v$}
\rput(-2.7,1){\Large $v$}
}
\end{picture}
\caption{Restriction of the phase space to the subspace $v =$ const.}
\label{fig Cavalieri}
\end{figure}
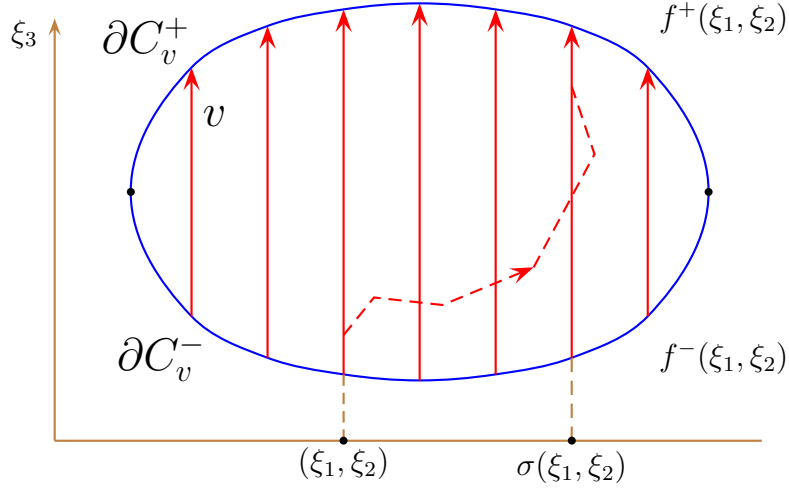

The length $\tau(\xi,v)$ of the billiard trajectory starting at $\xi = (\xi_1, \xi_2, f^-(\xi_1,\xi_2))$ does not exceed the distance between the initial and final points of the trajectory, $(\xi_1, \xi_2, f^-(\xi_1,\xi_2))$ and $(\s(\xi_1,\xi_2), f^+(\s(\xi_1,\xi_2)))$, therefore we obtain the estimate {
$$\tau(\xi,v) \ge \sqrt{|\s(\xi_1,\xi_2)-(\xi_1,\xi_2)|^2 + (f^+(\s(\xi_1,\xi_2)) - f^-(\xi_1,\xi_2))^2} \ge f^+(\s(\xi_1,\xi_2)) - f^-(\xi_1,\xi_2),$$
and thus,}
$$
\int_{\pl C_v^-} \tau(\xi,v)\, (-n(\xi) \cdot v)\, d\xi \ge \int_\Om (f^+(\s(\xi_1,\xi_2)) - f^-(\xi_1,\xi_2))\, d\xi_1\, d\xi_2 =
$$
\beq\label{5}
= \int_\Om f^+(\xi_1,\xi_2)\, d\xi_1\, d\xi_2 - \int_\Om f^-(\xi_1,\xi_2)\, d\xi_1\, d\xi_2.
\eeq
In the last equality the measure preserving property of $\s$ was used. Note also that for some values of $v$ { (and therefore for their neighborhoods)} the inequality in (\ref{5}) is strict.

On the other hand, the length of the trajectory corresponding to $B = \emptyset$ equals $\tau_0(\xi,v) = f^+(\xi_1,\xi_2) - f^-(\xi_1,\xi_2)$, therefore
\beq\label{4}
\int_{\pl C_v^-} \tau_0(\xi,v)\, (-n(\xi) \cdot v)\, d\xi =  \int_\Om (f^+(\xi_1,\xi_2) - f^-(\xi_1,\xi_2))\, d\xi_1\, d\xi_2 \le \int_{\pl C_v^-} \tau(\xi,v)\, (-n(\xi) \cdot v) d\xi.
\eeq
Here again for some values of $v$ the inequality is strict.
Integrating both parts in (\ref{4}) over $v$, we get the phase volume $4\pi |C|$ in the left hand side, and the reachable phase volume (which is less or equal than $4\pi |C \setminus B|$) in the right hand side. Thus, we get
$$
4\pi |C| < 4\pi |C \setminus B|,
$$
which is a contradiction.
\end{proof}

\begin{zam}
\rm Literally repeating the proof for { piecewise smooth} surfaces (which, in contrast to bodies, have zero volume), one concludes that there are no surfaces invisible (or having zero resistance) in all directions.
\end{zam}

\section*{Acknowledgements}

This work was partly supported by the { Center for Research and Development in Mathematics and Applications (CIDMA)} from the ''{\it Funda\c{c}\~{a}o para a Ci\^{e}ncia e a Tecnologia}'' (FCT), cofinanced by the European Community Fund FEDER/POCTI, and by the FCT research project PTDC/MAT/72840/2006.


\begin{thebibliography}{99}



\bibitem{0-resist}
A. Aleksenko and A. Plakhov. {\it Bodies of zero resistance and bodies invisible in one direction}. Nonlinearity {\bf 22}, 1247-1258 (2009).

\bibitem{BB}
{ D. Bucur and G. Buttazzo, {\it Variational Methods in Shape Optimization Problems}.\, Birkh\"auser (2005).}

\bibitem{BK}
{ G. Buttazzo and B. Kawohl. \textit{On Newton's problem of minimal resistance}. Math. Intell. {\bf 15}, 7-12 (1993).}

\bibitem{NYTimes2007} K. Chang. {\it Light Fantastic: Flirting With Invisibility}. The New York Times, June 12 2007.

\bibitem{CL1}
{ M. Comte and T. Lachand-Robert. \textit{Newton's problem of the body of minimal resistance under a single-impact assumption}. Calc. Var. Partial Differ. Equ. {\bf 12}, 173-211 (2001).}

\bibitem{CNN} CNN, {\it Science reveals secters of invisibility}. CNN.com, August 9, 2006.

\bibitem{DailyMail} Daily Mail Reporter, {\it Invisibility cloak a step closer as scientists bend light `the wrong way'}. Daily Mail, August 11, 2008.

\bibitem{CloakErgin}
T. Ergin, N. Stenger, P. Brenner, J. B. Pendry and M. Wegener. {\it Three-Dimensional Invisibility Cloak
at Optical Wavelengths}. Science {\bf 328}, 337--339 (2010).

\bibitem{Dev}
{ A. J. Devaney. {\it Nonuniqueness in the inverse scattering problem.} J. Math. Phys. {\bf 19}, 1526-1531 (1978).}

\bibitem{UTokyo} M. Inami, N. Kawakami, S. Tachi. {\it Optical Camouflage Using Retro-Reflective Projection Technology}
Proceedings of the 2nd IEEE/ACM International Symposium on Mixed and Augmented Reality. p. 348 (2003).

\bibitem{LO}
{ T. Lachand-Robert and E. Oudet. \textit{Minimizing within convex bodies using a convex hull method}.\, SIAM J. Optim. {\bf 16}, 368--379 (2006).}

\bibitem{LP1}
{ T. Lachand-Robert and M.~A. Peletier. \textit{Newton's problem of the body of minimal resistance in the class of convex developable functions}. Math. Nachr. {\bf 226}, 153-176 (2001).}

\bibitem{N}
I. Newton,\, {\it Philosophiae naturalis principia mathematica}.\, 1687.

\bibitem{RMS_review}
{ A Plakhov. {\it Scattering in billiards and problems of Newtonian aerodynamics}. Russ. Math. Surv. {\bf 64}, 873–938 (2009).}

\bibitem{ESAIM}
{ A Plakhov and A Aleksenko. {\it The problem of the body of revolution of minimal resistance}. ESAIM Control Optim. Calc. Var. {\bf 16}, 206-220 (2010).}

\bibitem{CloakSchurig}
D. Schurig, J. J. Mock, B. J. Justice, S. A. Cummer, J. B. Pendry, A. F. Starr and D. R. Smith {\it Metamaterial Electromagnetic Cloak
at Microwave Frequencies}, Science {\bf 314}, 977 (2006).

\bibitem{CloakValentine}
J. Valentine, S. Zhang, T. Zentgraf, E. Ulin-Avila, D. A. Genov, G. Bartal and  X. Zhang.
{\it Three-dimensional optical metamaterial with a negative refractive index}. Nature, {\bf 455} (2008).


\bibitem{Veselago}
V.G. Veselago. {\it The electrodynamics of substances with simultaneously negative values of $\epsilon$ and $\mu$}
Sov. Phys. Usp. 10 (4) (1968), 509–14.

\bibitem{Wolf}
{ E. Wolf and T. Habashy. {\it Invisible bodies and uniqueness of the inverse scattering problem.} J. Modern Optics {\bf 40}, 785-792 (1993).}


\end{thebibliography}
\end{document}